\newtheorem{tw}{Theorem}[section]
\newtheorem{cor}[tw]{Corollary}
\theoremstyle{definition}
\begin{document}
\begin{center}

{\Large Fej\'er's approximation of continuous functions of unitary operators}\\
{\sc Krzysztof Zajkowski}\\
Institute of Mathematics, University of Bialystok \\ 
Akademicka 2, 15-267 Bialystok, Poland \\ 
E-mail:kryza@math.uwb.edu.pl 
\end{center}

\begin{abstract}
This paper is concerned with a certain aspect of the spectral theory of unitary operators in a Hilbert space and its aim is to give an explicit construction of continuous functions of unitary operators. Starting from a given unitary operator we give a family of sequences of trigonometric polynomials converging weakly to the complex measures which allow us to define functions of the operator.
\end{abstract}

{\it 2000 Mathematics Subject Classification:} 47B15,(47C15).

{\it Key words and phrases}: spectral theorem for unitary operators, Fej\'er theorem, Riesz representation theorem, Borel measures, weak-$\ast$ convergence, $\ast$-homomorphism of $C^\ast$-algebras.

\section{Introduction}

Spectral theorems belong to classical subjects of mathematics and they serve, among other purposes, as a tool to establish functional calculus, i.e. to define functions of operators. However, it seems that while different versions of spectral theorems define functions of operators, they rather seldom provide their explicit constructions. This work is intended as an attempt to find a more constructive method to define continuous functions of unitary operators. Using the Fej\'er theorem on approximation of continuous functions by Fourier series we can obtain the approximation of continuous functions of any unitary operator.

\section{Continuous functions of unitary operators}
One can approximate continuous functions in many way. The Fej\'er theorem says about one of them.
For every continuous $2\pi$-periodic function $f$ a sequence of trigonometric polynomials
$
\int_{[0,2\pi)}K_N(t-s)f(s)ds,
$
where $K_N(\tau)=\sum_{k=-N}^{k=N}(1-\frac{|k|}{N+1})e^{ik\tau}$ is the $N$-th Fej\'er kernel, uniformly converges
to the function $f$. A consequence of the Fej\'er theorem is a convergence of a sequence of trigonometric polynomials
$
\int_{[0,2\pi)}K_N(t-s)\mu(ds),
$
where $\mu$ denotes any finite (periodic) Borel measure $\mu$ on the interval $[0,2\pi)$, to the measure $\mu$
in the weak-$\ast$ topology. The above statement may serve to establish a base of the functional calculus for unitary operators.

Let $\mathcal{H}$ denote a complex Hilbert space with $\left\langle \cdot,\cdot\right\rangle$ being the inner product and $U$ any
unitary operator on $\mathcal{H}$. We prove that for every $x,y\in \mathcal{H}$ there exists a finite Borel measure $\mu_{x,y}$ on the interval $[0,2\pi)$ such that the expressions $\left\langle U^kx,y\right\rangle$, $k\in\mathbb{Z}$, will be equal to $k$-th trigonometric moments of this measure $\mu_{x,y}$, i.e.
\begin{equation*}
\int_{[0,2\pi)}e^{ikt}\mu_{x,y}(dt)=\left\langle U^kx,y\right\rangle.
\end{equation*}
Let $T_N(t)$ denote a one-parameter  family of operators
$$
T_N(t)=\sum_{n=0}^Ne^{-itn}U^n. 
$$
Notice now that 
\begin{equation*}
\frac{1}{N+1}\left\langle T_N(t)x,T_N(t)y\right\rangle=\sum_{k=-N}^{k=N}\Big(1-\frac{|k|}{N+1}\Big)\left\langle U^kx,y\right\rangle e^{-ikt}.
\end{equation*}
If there exists the above mentioned measure $\mu_{x,y}$ then this expression is equal to 
$
\int_{[0,2\pi)}K_N(t-s)\mu_{x,y}(ds).
$

Let $C(T)$ denote the Banach space of complex continuous functions on the unit circle $T$ endowed with
the supremum norm $\Vert\cdot\Vert_\infty$. If $f\in C(X)$ then $f(e^{it})$ is a complex continuous $2\pi$-periodic 
function on $\mathbb{R}$.
We define a functional $F^N_{x,y}$ on $C(T)$ as follows
\begin{equation*}
F^N_{x,y}(f)=\frac{1}{N+1}\frac{1}{2\pi}\int_{[0,2\pi)}\left\langle T_N(t)x,T_N(t)y\right\rangle f(e^{it})dt.
\end{equation*}
Now we prove the following theorem.
\begin{tw}
\label{tw1}
For any fixed $x,y\in\mathcal{H}$ the sequence of functionals $F^N_{x,y}$ weak-$\ast$ converges with $N\to\infty$
to a complex Borel measure. Denoting this limit measure by $\mu_{x,y}$ we have for every $f$ in $C(T)$
\begin{equation*}
\lim_{N\to\infty}F^N_{x,y}(f)=\int_{[0,2\pi)}f(e^{it})\mu_{x,y}(dt).
\end{equation*}
\end{tw}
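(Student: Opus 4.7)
The plan is to treat each $F^N_{x,y}$ as a bounded linear functional on the Banach space $C(T)$ and to prove its weak-$\ast$ convergence via the classical two-step scheme: establish (a) pointwise convergence on the trigonometric polynomials, which are dense in $C(T)$ by the Stone--Weierstrass theorem, and (b) a uniform norm bound $\sup_N\Vert F^N_{x,y}\Vert<\infty$. Combining (a) and (b) by a standard $\varepsilon/3$ argument will give convergence of $F^N_{x,y}(f)$ for every $f\in C(T)$, and the Riesz representation theorem on the compact space $T$ will then identify the limit functional with a complex Borel measure $\mu_{x,y}$, transported to $[0,2\pi)$ via $t\mapsto e^{it}$.

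For step (a) I substitute the expansion displayed just above the theorem into the defining integral of $F^N_{x,y}(f)$ and integrate term by term. Testing against the character $f(e^{it})=e^{imt}$ and using orthogonality of the exponentials on $[0,2\pi)$ leaves only the Fourier mode $k=m$ (provided $|m|\leq N$), yielding
\[
F^N_{x,y}(e^{im\cdot})=\Big(1-\tfrac{|m|}{N+1}\Big)\langle U^mx,y\rangle\longrightarrow\langle U^mx,y\rangle \qquad (N\to\infty).
\]
By linearity this gives convergence of $F^N_{x,y}(p)$ for every trigonometric polynomial $p$.

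Step (b) is the main technical point. By the triangle inequality and pointwise Cauchy--Schwarz in $\mathcal{H}$,
\[
|F^N_{x,y}(f)|\leq\frac{\Vert f\Vert_\infty}{N+1}\cdot\frac{1}{2\pi}\int_{[0,2\pi)}\Vert T_N(t)x\Vert\,\Vert T_N(t)y\Vert\,dt,
\]
so everything reduces to computing $\frac{1}{2\pi}\int_{[0,2\pi)}\Vert T_N(t)x\Vert^2\,dt$. Expanding the norm as a double sum and invoking orthogonality of the characters again collapses it to the diagonal $\sum_{n=0}^{N}\Vert U^n x\Vert^2$, which equals $(N+1)\Vert x\Vert^2$ because $U$ is unitary. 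A further Cauchy--Schwarz in $L^2([0,2\pi),dt/(2\pi))$ then gives $|F^N_{x,y}(f)|\leq\Vert x\Vert\,\Vert y\Vert\,\Vert f\Vert_\infty$, so the $F^N_{x,y}$ are uniformly bounded in $(C(T))^\ast$ by $\Vert x\Vert\,\Vert y\Vert$.

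With (a) and (b) in hand, given $f\in C(T)$ and $\varepsilon>0$ I pick a trigonometric polynomial $p$ with $\Vert f-p\Vert_\infty<\varepsilon$; the uniform bound controls $|F^N_{x,y}(f-p)|$ and $|F^M_{x,y}(f-p)|$ by $\Vert x\Vert\,\Vert y\Vert\,\varepsilon$, while (a) makes $(F^N_{x,y}(p))_N$ Cauchy, so $(F^N_{x,y}(f))_N$ is Cauchy and hence convergent. The limit $F_{x,y}\colon C(T)\to\mathbb{C}$ is linear and bounded by $\Vert x\Vert\,\Vert y\Vert$, and the Riesz representation theorem produces the unique complex Borel measure $\mu_{x,y}$ on $T$ with $F_{x,y}(f)=\int_{[0,2\pi)}f(e^{it})\,\mu_{x,y}(dt)$.
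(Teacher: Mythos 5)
Your proposal is correct and follows essentially the same route as the paper: compute $F^N_{x,y}$ on the characters $z^n$ to get pointwise convergence on the dense span of trigonometric polynomials, establish the uniform bound $\Vert F^N_{x,y}\Vert\le\Vert x\Vert\,\Vert y\Vert$ via two applications of Cauchy--Schwarz together with the identity $\frac{1}{N+1}\frac{1}{2\pi}\int_{[0,2\pi)}\Vert T_N(t)x\Vert^2\,dt=\Vert x\Vert^2$, and conclude by density plus the Riesz representation theorem. You merely make explicit the $\varepsilon/3$ argument that the paper leaves implicit.
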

\begin{proof}
Taking $f(z)=z^n$ for $z\in T$ ($n\in{\Bbb Z}$) we obtain 
\begin{eqnarray*}
F^N_{x,y}(z^n) & = & \frac{1}{N+1}\frac{1}{2\pi}\int_{[0,2\pi)}\left\langle T_N(t)x,T_N(t)y\right\rangle e^{int}dt\\
\; & = & \sum_{k=-N}^{k=N}\Big(1-\frac{|k|}{N+1}\Big)\left\langle U^kx,y\right\rangle \frac{1}{2\pi}\int_{[0,2\pi)}e^{i(n-k)t}dt\\
\; & = & \Big(1-\frac{|n|}{N+1}\Big)\left\langle U^n x,y\right\rangle,\;\;|n|\le N.
\end{eqnarray*}
For $n=0$ it gives
$$
F^N_{x,y}(1)=\frac{1}{N+1}\frac{1}{2\pi}\int_{[0,2\pi)}\left\langle T_N(t)x,T_N(t)y\right\rangle dt=
\left\langle x,y\right\rangle,
$$
in particular
\begin{equation}
\label{eq.1}
F^N_{x,x}(1)=\frac{1}{N+1}\frac{1}{2\pi}\int_{[0,2\pi)}\Vert T_N(t)x\Vert^2_\mathcal{H}dt=\Vert x\Vert^2_\mathcal{H}.
\end{equation}

Notice now that
\begin{equation}
\label{eq.2}
\lim_{N\to\infty}F^N_{x,y}(z^n)=\lim_{N\to\infty}\Big(1-\frac{|n|}{N+1}\Big)
\left\langle U^nx,y\right\rangle=\left\langle U^nx,y\right\rangle
\end{equation}
for each $n\in{\Bbb Z}$.

The norms of the functionals $F^N_{x,y}$
$$
\Vert F^N_{x,y} \Vert \le \frac{1}{N+1}\frac{1}{2\pi}\int_{[0,2\pi)}|\left\langle T_N(t)x,T_N(t)y\right\rangle|dt.
$$
Applying twice the Schwarz inequality and (\ref{eq.1}) we obtain
\begin{eqnarray*}
\Vert F^N_{x,y} \Vert & \le &  \frac{1}{N+1}\frac{1}{2\pi}\int_{[0,2\pi)}\Vert T_N(t)x\Vert_\mathcal{H}\Vert T_N(t)y\Vert_\mathcal{H}dt\\
\; & \le & \Big(\frac{1}{N+1}\frac{1}{2\pi}\int_{[0,2\pi)}\Vert T_N(t)x\Vert^2_\mathcal{H}dt\Big)^\frac{1}{2}
\Big(\frac{1}{N+1}\frac{1}{2\pi}\int_{[0,2\pi)}\Vert T_N(t)y\Vert^2_\mathcal{H}dt\Big)^\frac{1}{2}\\
\; & = & \Vert x \Vert_\mathcal{H}\Vert y \Vert_\mathcal{H}.
\end{eqnarray*}
The sequence ($z^n$) is linearly dense in $C(T)$ and norms of $F^N_{x,y}$ are bounded by 
$\Vert x \Vert_\mathcal{H}\Vert y \Vert_\mathcal{H}$. It follows that 
$\lim_{N\to\infty}F^N_{x,y}(f)$ is linear bounded functional on $C(X)$. By the Riesz representation
theorem we obtain that
$$
\lim_{N\to\infty}F^N_{x,y}(f)=\int_{[0,2\pi)}f(e^{it})\mu_{x,y}(dt),
$$
where $\mu_{x,y}$ denotes a complex Borel measure on $[0,2\pi)$. 
\end{proof}
Notice now that the relation (\ref{eq.2}) extends on any trigonometric polynomial $p(e^{it})=\sum_{k=-N}^Nc_ke^{ikt}$
\begin{equation}
\label{eq.3}
\lim_{N\to\infty}F^N_{x,y}(p)=\left\langle p(U)x,y\right\rangle,
\end{equation}
where $p(U)=\sum_{k=-N}^Nc_k U^k$. Now we prove that this limiting relation extends on all continuous $2\pi$-periodic function.
\begin{tw}
\label{tw2}
For every continuous function $f\in C(T)$ there exists a bounded operator, denoted by $f(U)$, such that
$$
\lim_{N\to\infty}F^N_{x,y}(f)=\left\langle f(U)x,y\right\rangle.
$$ 
\end{tw}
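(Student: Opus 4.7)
The plan is to realize the limit functional from Theorem \ref{tw1} as a bounded sesquilinear form on $\mathcal{H}\times\mathcal{H}$, parametrised by $f\in C(T)$, and then apply the Hilbert-space form of the Riesz representation theorem to produce the operator $f(U)$. So I would set
$$
B_f(x,y):=\lim_{N\to\infty}F^N_{x,y}(f)=\int_{[0,2\pi)}f(e^{it})\,\mu_{x,y}(dt),
$$
which is well defined by Theorem \ref{tw1}, and aim to show that $B_f(\cdot,\cdot)$ is sesquilinear and bounded by $\|f\|_\infty\|x\|_\mathcal{H}\|y\|_\mathcal{H}$.

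For sesquilinearity I would go back to the definition of $F^N_{x,y}$. Since $T_N(t)=\sum_{n=0}^N e^{-itn}U^n$ is a linear operator in $x$ and in $y$, the integrand $\langle T_N(t)x,T_N(t)y\rangle f(e^{it})$ is linear in $x$ and conjugate-linear in $y$, so each $F^N_{x,y}(f)$ enjoys the same bilinearity property. These relations are algebraic identities, hence preserved when we pass to the limit in $N$; therefore $B_f$ is a sesquilinear form on $\mathcal{H}\times\mathcal{H}$.

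For the bound I would reuse the estimate $\|F^N_{x,y}\|\le\|x\|_\mathcal{H}\|y\|_\mathcal{H}$ obtained in the proof of Theorem \ref{tw1}, which gives $|F^N_{x,y}(f)|\le\|f\|_\infty\|x\|_\mathcal{H}\|y\|_\mathcal{H}$ for every $N$; letting $N\to\infty$ yields $|B_f(x,y)|\le\|f\|_\infty\|x\|_\mathcal{H}\|y\|_\mathcal{H}$. The standard Riesz-type theorem for bounded sesquilinear forms on a Hilbert space then produces a unique bounded operator, which we denote by $f(U)$, satisfying $B_f(x,y)=\langle f(U)x,y\rangle$ together with $\|f(U)\|\le\|f\|_\infty$.

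There is no serious obstacle in this argument; the only delicate verification is that the limit in $N$ genuinely transmits sesquilinearity and the operator bound from the prelimit functionals to $B_f$. Both facts are immediate from the linearity of $T_N(t)$ and the uniform norm bound $\|F^N_{x,y}\|\le\|x\|_\mathcal{H}\|y\|_\mathcal{H}$ already established, so the theorem follows directly from Theorem \ref{tw1} combined with the Riesz representation of bounded sesquilinear forms.
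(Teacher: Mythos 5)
Your proof is correct, but it takes a genuinely different route from the paper. You treat $(x,y)\mapsto\lim_{N\to\infty}F^N_{x,y}(f)=\int_{[0,2\pi)}f(e^{it})\mu_{x,y}(dt)$ as a sesquilinear form, bound it by $\Vert f\Vert_\infty\Vert x\Vert_\mathcal{H}\Vert y\Vert_\mathcal{H}$ using the uniform estimate $\Vert F^N_{x,y}\Vert\le\Vert x\Vert_\mathcal{H}\Vert y\Vert_\mathcal{H}$ from Theorem \ref{tw1}, and invoke the Riesz representation theorem for bounded sesquilinear forms to produce $f(U)$ in one step; all the verifications you flag (sesquilinearity and the bound surviving the limit in $N$) do go through. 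The paper instead works on the operator side: it first establishes $\int_{[0,2\pi)}p(e^{it})\mu_{x,y}(dt)=\left\langle p(U)x,y\right\rangle$ and $\Vert p(U)\Vert\le\Vert p\Vert_\infty$ for trigonometric polynomials, picks $p_n\to f$ uniformly, observes that $p_n(U)$ is then Cauchy in operator norm, defines $f(U)$ as its limit, and passes to the limit on both sides of the polynomial identity. Your argument is shorter and avoids choosing an approximating sequence, and it delivers the bound $\Vert f(U)\Vert\le\Vert f\Vert_\infty$ just as cleanly; the paper's construction has the advantage that $f(U)$ is exhibited explicitly as a norm limit of polynomials in $U$, which is exactly what the subsequent Corollary exploits to verify multiplicativity and the $\ast$-property --- with your definition you would need to reintroduce the polynomial approximation at that later stage anyway.
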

\begin{proof}
By (\ref{eq.3})  and Theorem \ref{tw1} we have 
\begin{equation}
\label{eq.4}
\int_{[0,2\pi)}p(e^{it})\mu_{x,y}(dt)=\left\langle p(U)x,y\right\rangle
\end{equation}
and the following estimation $\Vert p(U)\Vert\le \Vert p \Vert_\infty$. Let now a sequence of trigonometric polynomials $p_n$ converge uniformly to a continuous function $f$ on $T$. Then the sequence $p_n(U)$ also converges to a limit
which we will denote by $f(U)$. By continuity of the inner product we get
$$
\lim_{n\to\infty}\left\langle p_n(U)x,y\right\rangle=\left\langle f(U)x,y\right\rangle.
$$
But on the other hand by continuity of the Borel measure and the equation (\ref{eq.4})  we obtain
\begin{eqnarray*}
\lim_{n\to\infty}\left\langle p_n(U)x,y\right\rangle & = & \lim_{n\to\infty}\int_{[0,2\pi)}p_n(e^{it})\mu_{x,y}(dt)\\
\; & = & \int_{[0,2\pi)}f(e^{it})\mu_{x,y}(dt).
\end{eqnarray*}
In this way by Theorem \ref{tw1} and the above two equations we obtained
$$
\lim_{N\to\infty}F^N_{x,y}(f)=\int_{[0,2\pi)}f(e^{it})\mu_{x,y}(dt)=\left\langle f(U)x,y\right\rangle.
$$
\end{proof}
\begin{cor}
For any unitary operator $U$ on the Hilbert space $\mathcal{H}$ the limiting relation of Theorem \ref{tw2}
defines $\ast$-homomorphism of the $C^\ast$-algebra of continuous $2\pi$-periodic functions into $C^\ast$-algebra 
of bounded operators on $\mathcal{H}$.
\end{cor}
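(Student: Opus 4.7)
The plan is to verify the $\ast$-homomorphism properties first on trigonometric polynomials, where the definition $p(U)=\sum_{k=-N}^N c_k U^k$ makes them manifest, and then to transfer them to continuous functions by the uniform-approximation argument already used in the proof of Theorem \ref{tw2}, exploiting the bound $\Vert f(U)\Vert\le\Vert f\Vert_\infty$.

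First I would settle the polynomial case. Linearity of $p\mapsto p(U)$ is immediate from the formula for $p(U)$. Multiplicativity reduces to the computation: if $p(z)=\sum_k a_k z^k$ and $q(z)=\sum_j b_j z^j$, then $(pq)(z)=\sum_n\bigl(\sum_k a_k b_{n-k}\bigr)z^n$, and the same convolution identity yields $p(U)q(U)=(pq)(U)$ because the $U^k$'s commute and multiply as $U^k U^j=U^{k+j}$. For the $\ast$-operation, unitarity gives $(U^k)^\ast=U^{-k}$, so $\overline{p(e^{it})}=\sum_k\overline{a_k}e^{-ikt}$ corresponds to the operator $\sum_k\overline{a_k}U^{-k}=p(U)^\ast$. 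Finally, the constant function $1$ trivially gives the identity operator.

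Next I would extend each property by density. Given $f,g\in C(T)$, pick trigonometric polynomials $p_n\to f$ and $q_n\to g$ uniformly (which exist by Fej\'er's theorem, already invoked in the paper). The estimate $\Vert p_n(U)-p_m(U)\Vert\le\Vert p_n-p_m\Vert_\infty$ from the proof of Theorem \ref{tw2} shows $p_n(U)\to f(U)$ in operator norm, and similarly $q_n(U)\to g(U)$. Linearity then follows from norm-continuity of the limit. For multiplicativity, note that $p_nq_n\to fg$ uniformly (a standard consequence of $\Vert p_n q_n - fg\Vert_\infty\le\Vert p_n\Vert_\infty\Vert q_n-g\Vert_\infty+\Vert p_n-f\Vert_\infty\Vert g\Vert_\infty$), so $(p_nq_n)(U)\to (fg)(U)$ by definition; on the other hand $(p_nq_n)(U)=p_n(U)q_n(U)\to f(U)g(U)$ by submultiplicativity of the operator norm. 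Equating the two limits gives $(fg)(U)=f(U)g(U)$. The $\ast$-property transfers similarly: $\overline{p_n}\to\overline{f}$ uniformly, so $\overline{p_n}(U)\to\overline{f}(U)$, while $\overline{p_n}(U)=p_n(U)^\ast\to f(U)^\ast$ by norm-continuity of the adjoint, hence $\overline{f}(U)=f(U)^\ast$.

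The argument is essentially routine, the only delicate point being that the operator $f(U)$ must be shown independent of the approximating sequence before the algebraic identities are chased; this independence is already implicit in Theorem \ref{tw2} (both sides equal $\int f(e^{it})\mu_{x,y}(dt)$), but I would make it explicit via the Cauchy estimate above. No further obstacle arises.
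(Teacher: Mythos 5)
Your proposal is correct, and its skeleton -- verify the algebraic identities on trigonometric polynomials, then transfer them to all of $C(T)$ by uniform approximation using $\Vert p(U)\Vert\le\Vert p\Vert_\infty$ -- is the same as the paper's for multiplicativity. The differences are in emphasis and in the treatment of the involution. You work throughout at the operator-norm level, producing the identities $(fg)(U)=f(U)g(U)$ and $\overline{f}(U)=f(U)^\ast$ directly as norm limits; the paper instead stays at the level of the matrix elements, first extending $\int_{[0,2\pi)}(p_1p_2)(e^{it})\mu_{x,y}(dt)=\left\langle p_1(U)p_2(U)x,y\right\rangle$ to continuous $f_1,f_2$ and only then reading off the operator identity from equality of the sesquilinear forms for all $x,y$. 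For the $\ast$-operation the routes genuinely diverge: you prove $\overline{p}(U)=p(U)^\ast$ on polynomials via $(U^k)^\ast=U^{-k}$ and pass to the limit, whereas the paper bypasses polynomials entirely and uses the conjugate symmetry $\overline{\left\langle T_N(t)y,T_N(t)x\right\rangle}=\left\langle T_N(t)x,T_N(t)y\right\rangle$ inside the integral defining $F^N_{x,y}$, obtaining $\lim_N F^N_{x,y}(\overline{f})=\overline{\left\langle f(U)y,x\right\rangle}=\left\langle f(U)^\ast x,y\right\rangle$ in one line. Your version is more self-contained at the algebra level and makes the well-definedness of $f(U)$ (independence of the approximating sequence) explicit, which the paper leaves implicit; the paper's version is shorter for the adjoint and ties the conclusion more directly to the limiting relation $\lim_N F^N_{x,y}(f)$ that the corollary is actually about. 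Both are complete.
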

\begin{proof}
For two trigonometric polynomials $p_1$ and $p_2$ we have
\begin{eqnarray*}
\int_{[0,2\pi)}(p_1\cdot p_2)(e^{it})\mu_{x,y}(dt) & = & \left\langle (p_1\cdot p_2)(U)x,y\right\rangle\\
\; & = & \left\langle p_1(U)p_2(U)x,y\right\rangle.
\end{eqnarray*}
Similarly, as in the proof of Theorem \ref{tw2}, we can extend this equation to any continuous functions $f_1,f_2\in C(T)$
$$
\int_{[0,2\pi)}(f_1\cdot f_2)(e^{it})\mu_{x,y}(dt)=\left\langle f_1(U)f_2(U)x,y\right\rangle. 
$$
Hence we get
$$
\lim_{N\to\infty}F^N_{x,y}(f_1f_2)=\left\langle f_1(U)f_2(U)x,y\right\rangle.
$$

Moreover,
\begin{eqnarray*}
\lim_{N\to\infty}F^N_{x,y}(\overline{f}) & = & \lim_{N\to\infty}
\frac{1}{N+1}\frac{1}{2\pi}\int_{[0,2\pi)}\overline{\left\langle T_N(t)y,T_N(t)x\right\rangle f(e^{it})}dt\\
\; & = & \overline{\left\langle f(U)y,x\right\rangle}=\left\langle f(U)^\ast x,y\right\rangle.
\end{eqnarray*}
This means, that the limiting relation of  Theorem \ref{tw2} define $\ast$-homomorphism of continuous $2\pi$-periodic functions into the $C^\ast$-algebra of bounded operators on $\mathcal{H}$.
\end{proof}

In this way we get the desired approximation of continuous functions of unitary operators.

{\bf Acknowledgment}. The author thanks Prof. A. Strasburger for his interest of this work and many interesting remarks.


\end{document}